\documentclass[10pt,a4paper]{article}
\usepackage[latin1]{inputenc}
\usepackage{amsmath}
\usepackage{amsfonts}
\usepackage{amssymb,amsbsy,amsthm}

\usepackage{enumerate}

\author{Mohamed Khaled and Tarek Sayed Ahmed\\ Department of Mathematics, Faculty of Science,\\
Cairo University, Giza, Egypt.}
\title{Strongly representable atom structures}
\date{}
\newtheorem{thm}{Theorem}[section]
\newtheorem{lem}{Lemma}[section]

\numberwithin{equation}{section}
\newtheorem{defn}{Definition}[section]

\def\A{{\mathfrak{A}}}
\begin{document}

\maketitle
\begin{abstract}
An atom structure of type $\mathcal{T}$ is said to be strongly representable if all atomic algebras (of the same type $\mathcal{T}$) with that atom structure are representable.
We show that for any finite $n\geq 3$ and any signature $\mathcal{T}$ between $Df_{n}$
and $QEA_{n}$, the class of strongly representable atom structures of type $\mathcal{T}$ is not elementary. We extensively use graphs and games as introduced in algebraic logic by Hirsch and Hodkinson.
\end{abstract}
\section{Introduction}
In \cite{hirsh}, Hirsch and Hodkinson proved that for finite $n\geq 3$, the class of strongly
representable cylindric-type atom structures of dimension $n$ is not definable by
any set of first-order sentences: it is not elementary class. Their method depends
on that $RCA_n$ is a variety, an atomic algebra $\A$ will be in $RCA_n$ if all the
equations defining $RCA_n$ are valid in $\A$. From the point of view of $At\A$, each
equation corresponds to a certain universal monadic second-order statement,
where the universal quantifiers are restricted to ranging over the sets of atoms
that are defined by elements of $\A$. Such a statement will fail in $\A$ if $At\A$ can be
partitioned into finitely many $\A$-definable sets with certain properties - they call
this a bad partition. This idea can be used to show that $RCA_n$ (for $n\geq 3$) is
not finitely axiomatizable, by finding a sequence of atom structures, each having
some sets that form a bad partition, but with the minimal number of sets in a
bad partition increasing as we go along the sequence. This can yield algebras
not in $RCA_n$ but with an ultraproduct that is in $RCA_n$. In this article we
extend the result of Hirsch and Hodkinson to any class of strongly representable
atom structure having signature between the diagonal free atom structures and
the quasi polyadic equality atom structures (recall the definitions of such algebras from \cite{tarski} and \cite{thompson}). As in \cite{hirsh} we deal only with finite
dimensional algebras. Fix a finite dimension $n <\omega$, with $n\geq 3$.
\section{Atom structures}
The action of the non-boolean operators in a completely additive
atomic $BAO$ is determined by their behavior over the atoms, and
this in turn is encoded by the atom structure of the algebra.
\begin{defn}(\textbf{Atom Structure})\\
Let $\mathcal{A}=\langle A, +, -, 0, 1, \Omega_{i}:i\in I\rangle$ be
an atomic boolean algebra with operators $\Omega_{i}:i\in I$. Let
the rank of $\Omega_{i}$ be $\rho_{i}$. The \textit{atom structure}
$At\mathcal{A}$ of $\mathcal{A}$ is a relational structure
\begin{center}$\langle At\mathcal{A}, R_{\Omega_{i}}:i\in I\rangle$\end{center}
where $At\mathcal{A}$ is the set of atoms of $\mathcal{A}$ as
before, and $R_{\Omega_{i}}$ is a $(\rho(i)+1)$-ary relation over
$At\mathcal{A}$ defined by\begin{equation*}R_{\Omega_{i}}(a_{0},
\cdots, a_{\rho(i)})\Longleftrightarrow\Omega_{i}(a_{1}, \cdots,
a_{\rho(i)})\geq a_{0}.\end{equation*}
\end{defn}
Similar 'dual' structure arise in other ways, too. For any not
necessarily atomic $BAO$ $\mathcal{A}$ as above, its
\textit{ultrafilter frame} is the
structure\begin{equation*}\mathcal{A}_{+}=\langle Uf(\mathcal{A}),
R_{\Omega_{i}}:i\in I\rangle,\end{equation*} where $Uf(\mathcal{A})$
is the set of all ultrafilters of (the boolean reduct of)
$\mathcal{A}$, and for $\mu_{0}, \cdots, \mu_{\rho(i)}\in
Uf(\mathcal{A})$, we put $R_{\Omega_{i}}(\mu_{0}, \cdots,
\mu_{\rho(i)})$ iff $\{\Omega(a_{1}, \cdots,
a_{\rho(i)}):a_{j}\in\mu_{j}$ for
$0<j\leq\rho(i)\}\subseteq\mu_{0}$.
\begin{defn}(\textbf{Complex algebra})\\
Conversely, if we are given an arbitrary structure
$\mathcal{S}=\langle S, r_{i}:i\in I\rangle$ where $r_{i}$ is a
$(\rho(i)+1)$-ary relation over $S$, we can define its
\textit{complex
algebra}\begin{equation*}\mathfrak{Cm}(\mathcal{S})=\langle \wp(S),
\cup, \setminus, \phi, S, \Omega_{i}\rangle_{i\in
I},\end{equation*}where $\wp(S)$ is the power set of $S$, and
$\Omega_{i}$ is the $\rho(i)$-ary operator defined
by\begin{equation*}\Omega_{i}(X_{1}, \cdots, X_{\rho(i)})=\{s\in
S:\exists s_{1}\in X_{1}\cdots\exists s_{\rho(i)}\in X_{\rho(i)},
r_{i}(s, s_{1}, \cdots, s_{\rho(i)})\},\end{equation*} for each
$X_{1}, \cdots, X_{\rho(i)}\in\wp(S)$.
\end{defn}
It is easy to check that, up to isomorphism,
$At(\mathfrak{Cm}(\mathcal{S}))\cong\mathcal{S}$ always, and
$\mathcal{A}\subseteq\mathfrak{Cm}(At\mathcal{A})$ for any
completely additive atomic $BAO$ $\mathcal{A}$. If $\mathcal{A}$ is
finite then of course
$\mathcal{A}\cong\mathfrak{Cm}(At\mathcal{A})$.\\
\begin{itemize}
\item{Atom structure of diagonal free-type algebra is $\mathcal{S}=\langle S, R_{c_{i}}:i<n\rangle$, where the $R_{c_{i}}$
is binary relation on $S$.}
\item{Atom structure of cylindric-type algebra is $\mathcal{S}=\langle S, R_{c_{i}}, R_{d_{ij}}:i, j<n\rangle$, where the
$R_{d_{ij}}$, $R_{c_{i}}$ are unary and binary relations on $S$. The
reduct $\mathfrak{Rd}_{df}\mathcal{S}=\langle S,
R_{c_{i}}:i<n\rangle$ is an atom structure of diagonal free-type.}
\item{Atom structure of substitution-type algebra is $\mathcal{S}=\langle S, R_{c_{i}}, R_{s^{i}_{j}}:i, j<n\rangle$, where the
$R_{d_{ij}}$, $R_{s^{i}_{j}}$ are unary and binary relations on $S$,
respectively. The reduct $\mathfrak{Rd}_{df}\mathcal{S}=\langle S,
R_{c_{i}}:i<n\rangle$ is an atom structure of diagonal free-type.}
\item{Atom structure of quasi polyadic-type algebra is $\mathcal{S}=\langle S, R_{c_{i}}, R_{s^{i}_{j}}$, $R_{s_{ij}}:i, j<n\rangle$, where the
$R_{c_{i}}$, $R_{s^{i}_{j}}$ and $R_{s_{ij}}$ are binary relations
on $S$. The reducts $\mathfrak{Rd}_{df}\mathcal{S}=\langle S,
R_{c_{i}}:i<n\rangle$ and $\mathfrak{Rd}_{Sc}\mathcal{S}=\langle S,
R_{c_{i}}, R_{s^{i}_{j}}:i, j<n\rangle$ are atom structures of
diagonal free and substitution types, respectively.}
\item{Atom structure of quasi polyadic equality-type algebra is $\mathcal{S}=\langle S, R_{c_{i}}, R_{d_{ij}}, R_{s^{i}_{j}}, R_{s_{ij}}:i, j<n\rangle$, where the $R_{d_{ij}}$
is unary relation on $S$, and $R_{c_{i}}$, $R_{s^{i}_{j}}$ and
$R_{s_{ij}}$ are binary relations on $S$.\begin{itemize}\item{The
reduct $\mathfrak{Rd}_{df}\mathcal{S}=\langle S, R_{c_{i}}:i\in
I\rangle$ is an atom structure of diagonal free-type.}
\item{The
reduct $\mathfrak{Rd}_{ca}\mathcal{S}=\langle S, R_{c_{i}},
R_{d_{ij}}:i, j\in I\rangle$ is an atom structure of
cylindric-type.}
\item{The
reduct $\mathfrak{Rd}_{Sc}\mathcal{S}=\langle S, R_{c_{i}},
R_{s^{i}_{j}}:i, j\in I\rangle$ is an atom structure of
substitution-type.}
\item{The
reduct $\mathfrak{Rd}_{qa}\mathcal{S}=\langle S, R_{c_{i}},
R_{s^{i}_{j}}, R_{s_{ij}}:i, j\in I\rangle$ is an atom structure of
quasi polyadic-type.}
\end{itemize}}
\end{itemize}
\begin{defn}
An algebra is said to be representable if and only if it is isomorphic to a subalgebra of a direct product of set algebras of the same type.
\end{defn}
\begin{defn}\label{strong rep}Let $\mathcal{S}$ be an $n$-dimensional algebra atom
structure. $\mathcal{S}$ is \textit{strongly representable} if every
atomic $n$-dimensional algebra $\mathcal{A}$ with
$At\mathcal{A}=\mathcal{S}$ is representable. We write $SDfS_{n}$,
$SCS_{n}$, $SSCS_{n}$, $SQS_{n}$ and $SQES_{n}$ for the classes of
strongly representable ($n$-dimensional) diagonal free, cylindric,
substitution, quasi polyadic and quasi polyadic equality algebra
atom structures, respectively.
\end{defn}
Note that for any $n$-dimensional algebra $\mathcal{A}$ and atom
structure $\mathcal{S}$, if $At\mathcal{A}=\mathcal{S}$ then
$\mathcal{A}$ embeds into $\mathfrak{Cm}\mathcal{S}$, and hence
$\mathcal{S}$ is strongly representable iff
$\mathfrak{Cm}\mathcal{S}$ is representable.
\section{Graphs and Strong representability}
In this section, by a graph we will mean a pair $\Gamma=(G, E)$,
where $G\not=\phi$ and $E\subseteq G\times G$ is a reflexive and
symmetric binary relation on $G$. We will often use the same
notation for $\Gamma$ and for its set of nodes ($G$ above). A pair
$(x, y)\in E$ will be called an edge of $\Gamma$. See \cite{graph}
for basic information (and a lot more) about graphs.
\begin{defn}
Let $\Gamma=(G, E)$ be a graph.
\begin{enumerate}
\item{A set $X\subset G$ is said to be \textit{independent} if $E\cap(X\times X)=\phi$.}
\item{The \textit{chromatic number} $\chi(\Gamma)$ of $\Gamma$ is the smallest $\kappa<\omega$ such that $G$ can be partitioned into $\kappa$ independent sets,
and $\infty$ if there is no such $\kappa$.}
\end{enumerate}
\end{defn}

\begin{defn}\ \begin{itemize}
\item For an equivalence relation $\sim$ on a set $X$, and $Y\subseteq
X$, we write $\sim\upharpoonright Y$ for $\sim\cap(Y\times Y)$. For
a partial map $K:n\rightarrow\Gamma\times n$ and $i, j<n$, we write
$K(i)=K(j)$ to mean that either $K(i)$, $K(j)$ are both undefined,
or they are both defined and are equal.
\item For any two relations $\sim$ and $\approx$. The composition of $\sim$ and $\approx$ is
the set
\begin{equation*}\sim\circ\approx=\{(a, b):\exists c(a\sim c\wedge c\approx b)\}.\end{equation*}\end{itemize}
\end{defn}
\begin{defn}Let $\Gamma$ be a graph.
We define an atom structure $\eta(\Gamma)=\langle H, D_{ij},
\equiv_{i}, \equiv_{ij}:i, j<n\rangle$ as follows:
\begin{enumerate}
\item{$H$ is the set of all pairs $(K, \sim)$ where $K:n\rightarrow \Gamma\times n$ is a partial map and $\sim$ is an equivalent relation on $n$
satisfying the following conditions \begin{enumerate}\item{If
$|n\diagup\sim|=n$, then $dom(K)=n$ and $rng(K)$ is not independent
subset of $n$.}
\item{If $|n\diagup\sim|=n-1$, then $K$ is defined only on the unique $\sim$ class $\{i, j\}$ say of size $2$ and $K(i)=K(j)$.}
\item{If $|n\diagup\sim|\leq n-2$, then $K$ is nowhere defined.}
\end{enumerate}}
\item{$D_{ij}=\{(K, \sim)\in H : i\sim j\}$.}
\item{$(K, \sim)\equiv_{i}(K', \sim')$ iff $K(i)=K'(i)$ and $\sim\upharpoonright(n\setminus\{i\})=\sim'\upharpoonright(n\setminus\{i\})$.}
\item{$(K, \sim)\equiv_{ij}(K', \sim')$ iff $K(i)=K'(j)$, $K(j)=K'(i)$, and $K(\kappa)=K'(\kappa) (\forall\kappa\in n\setminus\{i, j\})$
and if $i\sim j$ then $\sim=\sim'$, if not, then $\sim'=\sim\circ[i,
j]$.}
\end{enumerate}
\end{defn}
It may help to think of $K(i)$ as assigning the nodes $K(i)$ of
$\Gamma\times n$ not to $i$ but to
the set $n\setminus\{i\}$, so long as its elements are pairwise non-equivalent via $\sim$.\\
For a set $X$, $\mathcal{B}(X)$ denotes the boolean algebra
$\langle\wp(X), \cup, \setminus\rangle$. We write $a\cap b$ for
$-(-a\cup-b)$.
\begin{defn}\label{our algebra}
Let $\mathfrak{B}(\Gamma)=\langle\mathcal{B}(\eta(\Gamma)), c_{i},
s^{i}_{j}, s_{ij}, d_{ij}\rangle_{i, j<n}$ be the algebra, with
extra non-Boolean operations defined as follows:
\begin{center}
$d_{ij}=D_{ij}$,\\
$c_{i}X=\{c: \exists a\in X, a\equiv_{i}c\}$,\\
$s_{ij}X=\{c: \exists a\in X, a\equiv_{ij}c\}$,\\
$s^{i}_{j}X=\begin{cases}
c_{i}(X\cap D_{ij}), &\text{if $i\not=j$,}\\
X, &\text{if $i=j$.}
\end{cases}$
For all $X\subseteq \eta(\Gamma)$.
\end{center}
\end{defn}

\begin{defn}
For any $\tau\in\{\pi\in n^{n}: \pi \text{ is a bijection}\}$, and
any $(K, \sim)\in\eta(\Gamma)$. We define $\tau(K,
\sim)=(K\circ\tau, \sim\circ\tau)$.\end{defn}
The proof of the following two Lemmas is straightforward.
\begin{lem}\label{Lemma 1}\ \\
For any $\tau\in\{\pi\in n^{n}: \pi \text{ is a bijection}\}$, and
any $(K, \sim)\in\eta(\Gamma)$. $\tau(K, \sim)\in\eta(\Gamma)$.
\end{lem}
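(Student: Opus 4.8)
The plan is to show that each of the three defining clauses (a)--(c) for membership in $\eta(\Gamma)$ is preserved when we pass from $(K,\sim)$ to $(K\circ\tau,\sim\circ\tau)$, using throughout that $\tau$ is a \emph{bijection} of $n$ and therefore alters nothing essential, only the labelling of the indices. First I would record the two elementary facts on which everything rests. Since $\tau:n\to n$ is a total bijection and $K:n\to\Gamma\times n$ is a partial map, the composite $K\circ\tau$ is again a partial map $n\to\Gamma\times n$ with $dom(K\circ\tau)=\tau^{-1}(dom(K))$; moreover, because $\tau$ maps $\tau^{-1}(dom(K))$ onto $dom(K)$, we get the crucial identity $rng(K\circ\tau)=rng(K)$. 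Next, reading $\sim\circ\tau$ as the equivalence relation obtained by relabelling the points of $n$ through $\tau$ (that is, $i\,(\sim\circ\tau)\,j$ iff $\tau(i)\sim\tau(j)$, which is the reading under which $\sim\circ\tau$ is genuinely reflexive, symmetric and transitive), its classes are exactly the $\tau$-preimages $\tau^{-1}(C)$ of the $\sim$-classes $C$, so that $|n\diagup(\sim\circ\tau)|=|n\diagup\sim|$.

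The key point is that each clause (a)--(c) is phrased purely in terms of the quantities $|n\diagup\sim|$, $dom(K)$ and $rng(K)$, every one of which is preserved or faithfully transported by $\tau$. I would then split into three cases according to the common value $|n\diagup(\sim\circ\tau)|=|n\diagup\sim|$. If this value is $n$, then $\sim$ is the identity, so by (a) we have $dom(K)=n$ and $rng(K)$ not independent; hence $dom(K\circ\tau)=\tau^{-1}(n)=n$ and $rng(K\circ\tau)=rng(K)$ is still not independent, giving (a) for the image. If the value is $n-1$, the unique $\sim$-class of size $2$, say $\{i,j\}$, pulls back to the unique $(\sim\circ\tau)$-class of size $2$, namely $\{\tau^{-1}(i),\tau^{-1}(j)\}$; since $K$ is defined exactly on $\{i,j\}$ with $K(i)=K(j)$, the relabelled map $K\circ\tau$ is defined exactly on $\{\tau^{-1}(i),\tau^{-1}(j)\}$ and takes there the common value $K(i)=K(j)$, which is clause (b). If the value is at most $n-2$, then $K$ is nowhere defined, forcing $K\circ\tau$ nowhere defined, which is clause (c).

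The only step demanding genuine care is the middle case: one must confirm that a single two-element $\sim$-class transports to a single two-element $(\sim\circ\tau)$-class, and that the equality $K(i)=K(j)$ (in the paper's sense, which also covers both sides being undefined) is inherited by the relabelled map. Both are immediate once the relabelling description of $\sim\circ\tau$ is in hand, so I expect no real obstacle here; the substance of the argument is simply unwinding the definitions and observing that a bijection of the index set $n$ preserves equivalence-class counts, domains and ranges. Assembling the three cases then yields $\tau(K,\sim)\in\eta(\Gamma)$, as required.
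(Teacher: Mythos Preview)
Your proof is correct and is precisely the routine case-by-case verification the paper has in mind when it declares the lemma ``straightforward'' and omits the argument entirely. Your care in reading $\sim\circ\tau$ as the pullback relation (so that it is genuinely an equivalence relation) is appropriate, and the remainder is, as you observe, nothing more than unwinding the definitions under a bijective relabelling.
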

\begin{lem}\label{Lemma 2}\ \\For any $(K, \sim)$, $(K', \sim')$, and $(K'', \sim'')\in\eta(\Gamma)$, and $i, j\in n$:
\begin{enumerate}
\item{$(K, \sim)\equiv_{ii}(K', \sim')\Longleftrightarrow (K, \sim)=(K', \sim')$.}
\item{$(K, \sim)\equiv_{ij}(K', \sim')\Longleftrightarrow (K, \sim)\equiv_{ji}(K', \sim')$.}
\item{If $(K, \sim)\equiv_{ij}(K', \sim')$, and $(K, \sim)\equiv_{ij}(K'', \sim'')$, then $(K', \sim')=(K'', \sim'')$.}
\item{If $(K, \sim)\in D_{ij}$, then \\
$(K, \sim)\equiv_{i}(K', \sim')\Longleftrightarrow\exists(K_{1},
\sim_{1})\in\eta(\Gamma):(K, \sim) \equiv_{j}(K_{1},
\sim_{1})\wedge(K', \sim')\equiv_{ij}(K_{1}, \sim_{1})$.}
\item{$s_{ij}(\eta(\Gamma))=\eta(\Gamma)$.}
\end{enumerate}
\end{lem}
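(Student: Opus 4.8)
The plan is to derive all five items directly from the definitions of $\equiv_i$, $\equiv_{ij}$ and $D_{ij}$, treating (1)--(3) as formal properties of the relation $\equiv_{ij}$ and concentrating the real work on (4) and (5).

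For (1), I would simply unfold $\equiv_{ii}$: the clauses $K(i)=K'(i)$ and $K(\kappa)=K'(\kappa)$ for $\kappa\neq i$ force $K=K'$, while $i\sim i$ always holds, so the first case of the $\sim$-clause gives $\sim\,=\,\sim'$; the converse is immediate. For (2), observe that the defining conditions of $\equiv_{ij}$ are invariant under interchanging the names $i$ and $j$: the two equations $K(i)=K'(j)$, $K(j)=K'(i)$ are swapped with one another, the condition on $\kappa\in n\setminus\{i,j\}$ is unchanged, and the $\sim$-clause refers only to ``$i\sim j$'' and to the transposition $[i,j]=[j,i]$, so $\equiv_{ij}$ and $\equiv_{ji}$ are literally the same relation. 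For (3), the point is that $\equiv_{ij}$ is \emph{functional}: given $(K,\sim)$, the swap equations determine $K'$ uniquely and the split on whether $i\sim j$ determines $\sim'$ uniquely, so any two $\equiv_{ij}$-images of $(K,\sim)$ coincide.

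For (5) I would show that $\equiv_{ij}$ is an involutive bijection of $\eta(\Gamma)$ onto itself, from which $s_{ij}(\eta(\Gamma))=\eta(\Gamma)$ follows (the image is contained in $\eta(\Gamma)$, and every point is the $\equiv_{ij}$-image of its own partner). The one non-formal step is checking that the $\equiv_{ij}$-image $(K',\sim')$ of a point $(K,\sim)\in\eta(\Gamma)$ again satisfies the membership conditions (a)--(c). Since the transposition $[i,j]$ permutes $n$, one has $|n/{\sim'}|=|n/{\sim}|$, $\mathrm{rng}(K')=\mathrm{rng}(K)$ and $\mathrm{dom}(K')=[i,j]\,\mathrm{dom}(K)$; running through the cases $|n/{\sim}|=n$, $=n-1$, $\le n-2$ then shows that non-independence of the range, the ``defined only on the unique $2$-class'' condition, and nowhere-definedness are each preserved. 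This overlaps with Lemma~\ref{Lemma 1} and can be quoted from it, since on the relevant data $\equiv_{ij}$ agrees with the transposition action $\tau(K,\sim)$.

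The main obstacle is (4), which expresses the diagonal identity relating $\equiv_i$ to the composite $\equiv_j\circ\equiv_{ij}$, and here the hypothesis $(K,\sim)\in D_{ij}$, i.e.\ $i\sim j$, is essential. For the forward direction, given $(K,\sim)\equiv_i(K',\sim')$ I would take $(K_1,\sim_1)$ to be the $\equiv_{ij}$-image of $(K',\sim')$ (which exists by the defining formulas, is unique by (3), and lies in $\eta(\Gamma)$ by (5)), and then verify $(K,\sim)\equiv_j(K_1,\sim_1)$, namely $K(j)=K_1(j)$ and $\sim\upharpoonright(n\setminus\{j\})=\sim_1\upharpoonright(n\setminus\{j\})$; the converse composes the same data in the other order. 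Because $i\sim j$ with $i\neq j$ (the case $i=j$ being trivial by (1)), only two configurations occur: $|n/{\sim}|=n-1$ with the unique $2$-class equal to $\{i,j\}$, so $K$ is defined only there with $K(i)=K(j)$; and $|n/{\sim}|\le n-2$, where $K$ is nowhere defined and only the equivalence relations must be matched. In each configuration the required equalities follow by substituting the swap equations of $\equiv_{ij}$ into the agreements coming from $\equiv_i$ and $\equiv_j$, using $K(i)=K(j)$ in the first case. The fiddliest point is the bookkeeping of how $\sim$ transforms under the transposition and confirming it is compatible off the cylindrified index, but this is entirely mechanical once the two cases are isolated.
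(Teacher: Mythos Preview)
Your proposal is correct and matches the paper's approach. The paper gives no proof beyond declaring the lemma ``straightforward,'' and your plan---verifying each item directly from the definitions of $\equiv_i$, $\equiv_{ij}$, and $D_{ij}$, with items (1)--(3) being formal consequences and items (4), (5) handled by case analysis on $|n/{\sim}|$ via the membership conditions (a)--(c) and the transposition action of Lemma~\ref{Lemma 1}---is exactly the intended routine verification.
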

\begin{thm}\label{it is qea}
For any graph $\Gamma$, $\mathfrak{B}(\Gamma)$ is a simple
$QEA_{n}$.
\end{thm}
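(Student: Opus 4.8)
The plan is to exploit the fact that, by construction, $\mathfrak{B}(\Gamma)$ is nothing but the complex algebra $\mathfrak{Cm}(\eta(\Gamma))$ read off in the $QEA_{n}$ signature: the operators $c_{i}$ and $s_{ij}$ are exactly the complex-algebra diamonds of the binary relations $\equiv_{i}$ and $\equiv_{ij}$, the element $d_{ij}$ is the designated set $D_{ij}$, and $s^{i}_{j}$ is the term $c_{i}(\,\cdot\,\cap D_{ij})$. Consequently $\mathfrak{B}(\Gamma)$ is automatically an atomic, completely additive Boolean algebra with normal additive operators, so the Boolean axioms, normality ($c_{i}0=0$, $s_{ij}0=0$) and complete additivity come for free. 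Since every defining equation of $QEA_{n}$ is of Sahlqvist shape, its validity in a complex algebra is equivalent to a first-order condition on the underlying frame; thus the whole problem reduces to verifying, one by one, the first-order conditions on $\eta(\Gamma)$ that correspond to the $QEA_{n}$ axioms, and these are precisely the statements packaged in Lemmas \ref{Lemma 1} and \ref{Lemma 2} together with a few direct checks.

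For the cylindric core I first check that each $\equiv_{i}$ is an equivalence relation; reflexivity gives $x\leq c_{i}x$, transitivity gives $c_{i}c_{i}x=c_{i}x$, and together they yield the closure-operator law $c_{i}(x\cap c_{i}y)=c_{i}x\cap c_{i}y$. Commutativity $c_{i}c_{j}x=c_{j}c_{i}x$ follows from the confluence of $\equiv_{i}$ and $\equiv_{j}$ (independent modification of the data attached to coordinates $i$ and $j$). For the diagonals, $d_{ii}=D_{ii}=\eta(\Gamma)=1$ since $i\sim i$ always holds, symmetry of $D_{ij}$ is clear, and $d_{ij}=c_{k}(d_{ik}\cap d_{kj})$ for $k\notin\{i,j\}$ is read off from the definitions of $\equiv_{k}$ and $D_{ij}$. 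The genuinely cylindric axiom $c_{i}(d_{ij}\cap x)\cap c_{i}(d_{ij}\cap -x)=0$ (for $i\neq j$) is where the structure bites: it expresses that $\equiv_{i}$ acts functionally on the diagonal $D_{ij}$, and this is exactly the content of Lemma \ref{Lemma 2}(4).

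For the polyadic part, Lemma \ref{Lemma 2}(1)--(3) say precisely that $\equiv_{ij}$ is a symmetric, involutive, single-valued bijection of $\eta(\Gamma)$; hence $s_{ij}$ is a complete Boolean automorphism satisfying $s_{ii}=\mathrm{id}$, $s_{ij}=s_{ji}$ and $s_{ij}s_{ij}=\mathrm{id}$, while Lemma \ref{Lemma 2}(5) gives $s_{ij}1=1$. The remaining index-moving laws relating $s_{ij}$ to $c_{k}$ and to the diagonals — e.g. $s_{ij}c_{i}x=c_{j}s_{ij}x$ and $s_{ij}d_{ij}=d_{ij}$ — are translated into statements about how the twisted clause $\sim'=\sim\circ[i,j]$ in the definition of $\equiv_{ij}$ interacts with $\equiv_{k}$ and with $D_{kl}$, and are verified using Lemma \ref{Lemma 1} (closure of $\eta(\Gamma)$ under the relevant transpositions). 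Since $s^{i}_{j}$ is the defined term $c_{i}(x\cap d_{ij})$, all of its axioms reduce to facts already obtained for $c_{i}$ and $d_{ij}$.

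Finally, for simplicity I use the standard criterion that an atomic $QEA_{n}$ is simple iff it is nontrivial and $c_{0}c_{1}\cdots c_{n-1}x=1$ for every $x\neq 0$. Because the $c_{i}$ are pairwise-commuting closure operators, the element $c_{0}\cdots c_{n-1}x$ is closed under every $c_{j}$, so it equals the smallest $\bigcup_{i<n}\equiv_{i}$-closed set containing the atoms below $x$; thus the criterion reduces to showing that the relation $\bigcup_{i<n}\equiv_{i}$ connects any two atoms of $\eta(\Gamma)$. I would prove this connectivity by morphing an arbitrary atom into any target one coordinate at a time, each $\equiv_{i}$-step resetting the label $K(j)$ ($j\neq i$) and the $\sim$-relations involving $i$. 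The main obstacle I anticipate is precisely this connectivity argument together with the diagonal axiom of the previous paragraph: both require a careful case analysis governed by the three regimes (a)--(c) keyed to $|n\diagup\sim|$ and by the twisted composition $\sim\circ[i,j]$, and it is Lemma \ref{Lemma 2}(4) that is the linchpin making the functional behaviour of the operators — hence both the diagonal law and the passage between atoms — come out correctly.
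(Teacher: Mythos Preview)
Your outline is viable but proceeds quite differently from the paper. The paper does \emph{not} re-derive the cylindric core or simplicity: it simply cites \cite{hirsh} for the fact that $\mathfrak{Rd}_{ca}\mathfrak{B}(\Gamma)$ is already a simple $CA_{n}$, so axioms $Q_{1},Q_{2}$ and simplicity are inherited, and $Q_{3}$--$Q_{5}$, $Q_{10}$ are then read off from standard Henkin--Monk--Tarski theorems about $CA_{n}$. Only the genuinely polyadic items are checked by hand --- that $s_{ij}$ is a Boolean endomorphism (Lemma~\ref{Lemma 2}(3),(5)), $s_{ii}=\mathrm{id}$ and $s_{ij}=s_{ji}$ (parts (1),(2)), $s_{ij}s_{ij}=\mathrm{id}$, $s_{ij}s^{i}_{j}=s^{j}_{i}$ (part (4)), and the braid law $s_{ij}s_{i\kappa}=s_{j\kappa}s_{ij}$ via an explicit permutation and Lemma~\ref{Lemma 1}. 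Your Sahlqvist/frame-verification route is more self-contained and conceptually clean, but the paper's approach is shorter precisely because it externalises the cylindric half to \cite{hirsh}; in particular your proposed connectivity argument for simplicity, while it would work, is replaced in the paper by a one-line appeal to simplicity of the $CA$-reduct.

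One point needs correcting. You identify Lemma~\ref{Lemma 2}(4) as ``exactly the content'' of the diagonal axiom $c_{i}(d_{ij}\cap x)\cap c_{i}(d_{ij}\cap -x)=0$, i.e.\ functionality of $\equiv_{i}$ on $D_{ij}$. That is not what the lemma says: it asserts that for $a\in D_{ij}$ one has $a\equiv_{i}b$ iff there exists $c$ with $a\equiv_{j}c$ and $b\equiv_{ij}c$, which is a factorisation of $\equiv_{i}\!\upharpoonright\! D_{ij}$ through $\equiv_{j}$ and $\equiv_{ij}$, not a functionality statement. In the paper this lemma is used to prove the polyadic identity $s_{ij}s^{i}_{j}X=s^{j}_{i}X$, an axiom you do not single out. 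If you pursue your route you must still obtain the diagonal law, but from the explicit description of $\eta(\Gamma)$ (or by citing \cite{hirsh}), not from Lemma~\ref{Lemma 2}(4); and you should allocate Lemma~\ref{Lemma 2}(4) to the replacement--transposition interaction instead.
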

\begin{proof}\
We follow the axiomatization in \cite{thompson} except renaming the items by $Q_i$.
Let $X\subseteq\eta(\Gamma)$, and $i, j, \kappa\in n$:
\begin{itemize}
\item{$s^{i}_{i}=ID$ by definition \ref{our algebra}, $s_{ii}X=\{c:\exists a\in X, a\equiv_{ii}c\}=\{c:\exists a\in X, a=c\}=X$
(by Lemma \ref{Lemma 2} (1));\\
$s_{ij}X=\{c:\exists a\in X, a\equiv_{ij}c\}=\{c:\exists a\in X,
a\equiv_{ji}c\}=s_{ji}X$ (by Lemma \ref{Lemma 2} (2)).}
\item{Axioms $Q_{1}$, $Q_{2}$ follow directly from the fact that the reduct $\mathfrak{Rd}_{ca}\mathfrak{B}(\Gamma)=\langle\mathcal{B}(\eta(\Gamma)), c_{i}$, $d_{ij}\rangle_{i, j<n}$
is a cylindric algebra which is proved in \cite{hirsh}.}
\item{Axioms $Q_{3}$, $Q_{4}$, $Q_{5}$ follow from the fact that the reduct $\mathfrak{Rd}_{ca}\mathfrak{B}(\Gamma)$ is a cylindric algebra, and from \cite{tarski}
(Theorem 1.5.8(i), Theorem 1.5.9(ii), Theorem 1.5.8(ii)).}
\item{$s^{i}_{j}$ is a boolean endomorphism by \cite{tarski} (Theorem 1.5.3).
\begin{eqnarray*}
s_{ij}(X\cup Y)=&&\{c:\exists a\in(X\cup Y), a\equiv_{ij}c\}\\
=&&\{c:(\exists a\in X\vee\exists a\in Y), a\equiv_{ij}c\}\\
=&&\{c:\exists a\in X, a\equiv_{ij}c\}\cup\{c:\exists a\in Y,
a\equiv_{ij}c\}\\
=&&s_{ij}X\cup s_{ij}Y.
\end{eqnarray*}
$s_{ij}(-X)=\{c:\exists a\in(-X), a\equiv_{ij}c\}$, and
$s_{ij}X=\{c:\exists a\in X, a\equiv_{ij}c\}$ are disjoint. For, let
$c\in(s_{ij}(X)\cap s_{ij}(-X))$, then $\exists a\in X\wedge b\in
(-X)$, such that $a\equiv_{ij}c$, and $b\equiv_{ij}c$. Then $a=b$, (by
Lemma \ref{Lemma 2} (3)), which is a contradiction. Also,
\begin{eqnarray*}
s_{ij}X\cup s_{ij}(-X)=&&\{c:\exists a\in X,
a\equiv_{ij}c\}\cup\{c:\exists a\in(-X), a\equiv_{ij}c\}\\
=&&\{c:\exists a\in(X\cup-X), a\equiv_{ij}c\}\\
=&&s_{ij}\eta(\Gamma)\\
=&&\eta(\Gamma). \text{ (by Lemma \ref{Lemma 2} (5))}
\end{eqnarray*}
therefore, $s_{ij}$ is a boolean endomorphism.}
\item{\begin{eqnarray*}s_{ij}s_{ij}X=&&s_{ij}\{c:\exists a\in X, a\equiv_{ij}c\}\\
=&&\{b:(\exists a\in X\wedge c\in\eta(\Gamma)), a\equiv_{ij}c, \text{ and }
c\equiv_{ij}b\}\\
=&&\{b:\exists a\in X, a=b\}\\
=&&X.\end{eqnarray*}}
\item{\begin{eqnarray*}s_{ij}s^{i}_{j}X=&&\{c:\exists a\in s^{i}_{j}X, a\equiv_{ij}c\}\\
=&&\{c:\exists b\in(X\cap d_{ij}),a\equiv_{i}b\wedge
a\equiv_{ij}c\}\\
=&&\{c:\exists b\in(X\cap d_{ij}), c\equiv_{j}b\} \text{ (by Lemma
\ref{Lemma 2} (4))}\\
=&&s^{j}_{i}X.\end{eqnarray*}}
\item{We need to prove that $s_{ij}s_{i\kappa}X=s_{j\kappa}s_{ij}X$ if $|\{i, j, \kappa\}|=3$. For, let $(K, \sim)\in s_{ij}s_{i\kappa}X$ then
$\exists(K', \sim')\in\eta(\Gamma)$, and $\exists(K'', \sim'')\in X$
such that $(K'', \sim'')\equiv_{i\kappa}(K', \sim')$ and $(K',
\sim')\equiv_{ij}(K, \sim)$.\\
Define $\tau:n\rightarrow n$ as follows:
\begin{eqnarray*}\tau(i)=&&j\\
\tau(j)=&&\kappa\\
\tau(\kappa)=&&i, \text{ and}\\
\tau(l)=&&l \text{ for every } l\in(n\setminus\{i, j,
\kappa\}).\end{eqnarray*} Now, it is easy to verify that $\tau(K',
\sim')\equiv_{ij}(K'', \sim'')$, and $\tau(K',
\sim')\equiv_{j\kappa}(K, \sim)$. Therefore, $(K, \sim)\in
s_{j\kappa}s_{ij}X$, i.e., $s_{ij}s_{i\kappa}X\subseteq
s_{j\kappa}s_{ij}X$. Similarly, we can show that
$s_{j\kappa}s_{ij}X\subseteq s_{ij}s_{i\kappa}X$.}
\item{Axiom $Q_{10}$ follows from \cite{tarski} (Theorem 1.5.7)}
\item{Axiom $Q_{11}$ follows from axiom 2, and the definition of $s^{i}_{j}$.}
\end{itemize}
Since $\mathfrak{Rd}_{ca}\mathfrak{B}$ is a simple $CA_{n}$, by
\cite{hirsh}, then $\mathfrak{B}$ is simple.
\end{proof}
\begin{defn}
Let $\mathfrak{C}(\Gamma)$ be the subalgebra of
$\mathfrak{B}(\Gamma)$ generated by the set of atoms.
\end{defn}
Note that the cylindric algebra constructed in \cite{hirsh} is
$\mathfrak{Rd}_{ca}\mathfrak{B}(\Gamma)$ not
$\mathfrak{Rd}_{ca}\mathfrak{C}(\Gamma)$, but all results in
\cite{hirsh} can be applied to
$\mathfrak{Rd}_{ca}\mathfrak{C}(\Gamma)$. Therefore, since our
results depends basically on \cite{hirsh}, we will refer to
\cite{hirsh} directly when we apply it to catch any result about
$\mathfrak{Rd}_{ca}\mathfrak{C}(\Gamma)$.
\begin{thm}
$\mathfrak{C}(\Gamma)$ is a simple $QEA_{n}$ generated by the set of the
$n-1$ dimensional elements.
\end{thm}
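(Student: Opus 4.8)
The plan is to check three things about $\mathfrak{C}(\Gamma)$: that it is a $QEA_{n}$, that it is simple, and that it is generated by its $n-1$ dimensional elements (those $x$ for which $c_{i}x=x$ for at least one $i<n$). The first is immediate: by \cite{thompson} the class $QEA_{n}$ is a variety, hence closed under subalgebras, and $\mathfrak{C}(\Gamma)$ is by its very definition a subalgebra of the $QEA_{n}$ $\mathfrak{B}(\Gamma)$ supplied by Theorem \ref{it is qea}. So $\mathfrak{C}(\Gamma)\in QEA_{n}$ with no extra work.

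For simplicity I would argue exactly as in the last line of the proof of Theorem \ref{it is qea} and reduce to the cylindric reduct: a $QEA_{n}$ is simple as soon as its $CA_{n}$ reduct is, since every congruence of the full algebra is in particular a congruence of the reduct, so trivial congruences of the reduct force trivial congruences of the whole. Now a $CA_{n}$ is simple iff it is nontrivial and $c_{0}c_{1}\cdots c_{n-1}x=1$ for every nonzero $x$. This is a condition whose quantifier ranges over the nonzero elements, so it transfers from $\mathfrak{Rd}_{ca}\mathfrak{B}(\Gamma)$ (simple, by \cite{hirsh}) down to the subalgebra $\mathfrak{Rd}_{ca}\mathfrak{C}(\Gamma)$: the two algebras share the unit $\eta(\Gamma)$ and the operations $c_{i}$, and every nonzero element of $\mathfrak{C}(\Gamma)$ is a nonzero element of $\mathfrak{B}(\Gamma)$. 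As $\mathfrak{C}(\Gamma)$ is nontrivial (it contains all the atoms of $\mathfrak{B}(\Gamma)$), this yields that $\mathfrak{Rd}_{ca}\mathfrak{C}(\Gamma)$, and hence $\mathfrak{C}(\Gamma)$, is simple.

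The substantive step is the generation claim. Since $\mathfrak{C}(\Gamma)$ is by definition generated by the atoms of $\mathfrak{B}(\Gamma)$, and each such atom is a singleton $\{(K,\sim)\}$ with $(K,\sim)\in\eta(\Gamma)$, it suffices to exhibit each atom as a term in $n-1$ dimensional elements already lying in $\mathfrak{C}(\Gamma)$. For any $y$ and any $i<n$, the element $c_{i}y$ satisfies $c_{i}(c_{i}y)=c_{i}y$, so it is an $n-1$ dimensional element, and if $y\in\mathfrak{C}(\Gamma)$ then $c_{i}y\in\mathfrak{C}(\Gamma)$. The crux is therefore the identity
\[ a=\prod_{i<n}c_{i}a \qquad\text{for every atom } a=\{(K,\sim)\}. \]
Unfolding the operations, $\prod_{i<n}c_{i}a$ is the set of all $(K',\sim')\in\eta(\Gamma)$ with $(K',\sim')\equiv_{i}(K,\sim)$ for every $i<n$. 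The defining clauses of $\equiv_{i}$ give $K'(i)=K(i)$ for all $i$, hence $K'=K$, together with $\sim'\upharpoonright(n\setminus\{i\})=\sim\upharpoonright(n\setminus\{i\})$ for all $i$. This is where the hypothesis $n\geq 3$ is used: every two-element subset $\{j,k\}$ of $n$ is contained in $n\setminus\{i\}$ for some $i$, so these restrictions determine whether $j\sim k$ and force $\sim'=\sim$, whence $(K',\sim')=(K,\sim)$ and the product collapses to $a$. Thus each atom is a finite meet of the $n-1$ dimensional elements $c_{i}a\in\mathfrak{C}(\Gamma)$, so the subalgebra generated by the $n-1$ dimensional elements contains every atom and therefore equals $\mathfrak{C}(\Gamma)$.

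I expect the only delicate point — the main obstacle — to be this last identity, and precisely the verification that the system of restrictions $\sim'\upharpoonright(n\setminus\{i\})=\sim\upharpoonright(n\setminus\{i\})$ really pins down $\sim$; this is exactly the place where $n\geq 3$ is indispensable and where the argument would break down for $n=2$. The remainder is routine bookkeeping with the definitions of $\equiv_{i}$, together with the facts that $QEA_{n}$ is a variety and that the cylindric reduct results are inherited from \cite{hirsh}.
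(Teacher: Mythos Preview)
Your proof is correct and follows essentially the same approach as the paper: the heart of both arguments is the identity $\{(K,\sim)\}=\prod_{i<n}c_{i}\{(K,\sim)\}$, proved by unwinding the definition of $\equiv_{i}$. Your treatment is in fact more careful than the paper's on two points it leaves implicit --- the transfer of simplicity from $\mathfrak{B}(\Gamma)$ down to the subalgebra $\mathfrak{C}(\Gamma)$ via the cylindric criterion $c_{0}\cdots c_{n-1}x=1$, and the observation that $n\geq 3$ is exactly what makes the restrictions $\sim'\upharpoonright(n\setminus\{i\})$ jointly determine $\sim'$.
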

\begin{proof}
$\mathfrak{C}(\Gamma)$ is a simple $QEA_{n}$ from  Theorem \ref{it
is qea}. It remains to show that $\{(K, \sim)\}=\prod\{c_{i}\{(K,
\sim)\}: i<n\}$ for any $(K, \sim)\in H$. Let $(K, \sim)\in H$,
clearly $\{(K, \sim)\}\leq\prod\{c_{i}\{(K, \sim)\}: i<n\}$. For the
other direction assume that $(K', \sim')\in H$ and $(K,
\sim)\not=(K', \sim')$. We show that $(K',
\sim')\not\in\prod\{c_{i}\{(K, \sim)\}: i<n\}$. Assume toward a
contradiction that $(K', \sim')\in\prod\{c_{i}\{(K, \sim)\}:i<n\}$,
then $(K', \sim')\in c_{i}\{(K, \sim)\}$ for all $i<n$, i.e.,
$K'(i)=K(i)$ and
$\sim'\upharpoonright(n\setminus\{i\})=\sim\upharpoonright(n\setminus\{i\})$
for all $i<n$. Therefore, $(K, \sim)=(K', \sim')$ which makes a
contradiction, and hence we get the other direction.
\end{proof}
\begin{thm}\label{chr. no.}
Let $\Gamma$ be a graph.
\begin{enumerate}\item{Suppose that $\chi(\Gamma)=\infty$. Then $\mathfrak{C}(\Gamma)$
is representable.}\item{If $\Gamma$ is infinite and
$\chi(\Gamma)<\infty$ then $\mathfrak{Rd}_{df}\mathfrak{C}$ is not
representable.}\end{enumerate}
\end{thm}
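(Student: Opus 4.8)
The plan is to deduce both parts from the cylindric case of \cite{hirsh} by passing to the reducts $\mathfrak{Rd}_{ca}\mathfrak{C}(\Gamma)$ and $\mathfrak{Rd}_{df}\mathfrak{C}(\Gamma)$, and then, for part (1), to account separately for the genuinely quasi-polyadic operators $s^{i}_{j}$ and $s_{ij}$. Throughout I would use the remark preceding the theorem, namely that every result of \cite{hirsh} about the Hirsch--Hodkinson cylindric algebra applies verbatim to $\mathfrak{Rd}_{ca}\mathfrak{C}(\Gamma)$.

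Part (2) is the easier direction, since the diagonal-free signature contains none of $d_{ij}$, $s^{i}_{j}$, $s_{ij}$: one has $\mathfrak{Rd}_{df}\mathfrak{C}(\Gamma)=\mathfrak{Rd}_{df}\mathfrak{Rd}_{ca}\mathfrak{C}(\Gamma)$, so the statement is exactly the Hirsch--Hodkinson non-representability result for the diagonal-free reduct, and I would invoke \cite{hirsh} directly. For completeness I would recall its mechanism: assuming a representation on a base $M$, which must be infinite because $\mathfrak{C}(\Gamma)$ is infinite whenever $\Gamma$ is, one combines a finite proper colouring of $\Gamma$ (available since $\chi(\Gamma)<\infty$) with an infinite Ramsey argument on $M$ to produce an $n$-tuple whose label $(K,\mathrm{id})$ is forced to have independent range in $\Gamma\times n$, contradicting clause (a) of the definition of $\eta(\Gamma)$. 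The hypothesis that $\Gamma$ is infinite is precisely what feeds the Ramsey step.

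Part (1) is where the new content lies. First I would invoke \cite{hirsh}: since $\chi(\Gamma)=\infty$, the reduct $\mathfrak{Rd}_{ca}\mathfrak{C}(\Gamma)$ is representable, and in fact $\exists$ wins the atomic representation game, yielding a \emph{complete} representation $h$ on some base $M$ under which the sets $h(K,\sim)$, for atoms $(K,\sim)$, partition the $n$-tuples of $M$. It then remains to promote $h$ to a representation of the full $QEA_{n}$. The replacement operators cost nothing: by Definition \ref{our algebra}, $s^{i}_{i}=ID$ and $s^{i}_{j}X=c_{i}(X\cap d_{ij})$ for $i\neq j$, which is a cylindric term in $c_{i}$ and $d_{ij}$, so $h$ already respects every $s^{i}_{j}$. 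The only operators requiring work are the transpositions $s_{ij}$. These I would handle by extending $\exists$'s strategy from the $CA_{n}$-game to the $QEA_{n}$-game: the additional demands of $\forall$ ask, for a tuple already labelled by an atom $a$, for the coordinate-swapped tuple to be labelled by the unique atom $b$ with $a\equiv_{ij}b$, which exists and is unique by Lemma \ref{Lemma 2}(2),(3),(5). Such demands can always be met because $\eta(\Gamma)$ is closed under the bijection action $\tau(K,\sim)=(K\circ\tau,\sim\circ\tau)$ (Lemma \ref{Lemma 1}); taking $\tau=[i,j]$ and relabelling exactly as in the verification of the $s_{ij}$-commutation laws in the proof of Theorem \ref{it is qea} keeps every induced label inside $\eta(\Gamma)$ and consistent with all prior commitments. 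Hence $\exists$ wins the $QEA_{n}$-game, the resulting complete representation interprets each $s_{ij}$ as the honest swap of coordinates $i$ and $j$, and $\mathfrak{C}(\Gamma)$ is representable as a $QEA_{n}$.

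The main obstacle is exactly this lifting step. A complete representation of a cylindric reduct need not in general be compatible with the transpositions, because an arbitrary representation is not symmetric: there is no a priori reason for the label of a coordinate-swapped tuple to be the $\equiv_{ij}$-image of the original label. The crux is therefore to ensure that $\exists$'s strategy can be chosen \emph{symmetric} under $\mathrm{Sym}(n)$, and this is precisely what Lemma \ref{Lemma 1} (closure of $\eta(\Gamma)$ under the $\tau$-action) delivers. Everything else is inherited from \cite{hirsh} — the representation when $\chi(\Gamma)=\infty$ in part (1), and the failure of the diagonal-free reduct when $\Gamma$ is infinite with $\chi(\Gamma)<\infty$ in part (2) — or is the elementary observation that $s^{i}_{j}$ is a cylindric term.
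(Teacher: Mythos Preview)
For part (1) you take a genuinely different route from the paper. The paper argues algebraically: given a $CA$-representation $f$ of $\mathfrak{Rd}_{ca}\mathfrak{C}(\Gamma)$ from \cite{hirsh}, it notes that each $s^{i}_{j}$ is already a cylindric term, and for $s_{\kappa l}$ it uses the identity $s_{\kappa l}x=s^{l}_{\mu}s^{\kappa}_{l}s^{\mu}_{\kappa}c_{\mu}x$, valid whenever $\mu\notin\Delta x\cup\{\kappa,l\}$ (together with $s_{\kappa l}c_{\kappa}x=s^{\kappa}_{l}x$ when $\mu\in\{\kappa,l\}$). Since by the preceding theorem $\mathfrak{C}(\Gamma)$ is generated by elements with $\Delta x\neq n$, these identities force $f$ to commute with every $s_{\kappa l}$. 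Your symmetrization of $\exists$'s strategy via the $\mathrm{Sym}(n)$-action on $\eta(\Gamma)$ is a legitimate alternative; it trades the algebraic identity for the structural symmetry recorded in Lemma \ref{Lemma 1}, and produces a representation that is $s_{ij}$-respecting by construction rather than by a posteriori verification.

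For part (2) there is a real gap. You assert that the statement is ``exactly the Hirsch--Hodkinson non-representability result for the diagonal-free reduct'' and would cite \cite{hirsh} directly, but \cite{hirsh} (Proposition 5.4) proves non-representability of the \emph{cylindric} algebra, not of its $Df$-reduct; the latter is a strictly stronger claim and does not follow by passing to a reduct. Your Ramsey sketch presupposes that every $n$-tuple over the base carries an atom label $(K,\sim)$, which amounts to assuming a complete representation; an arbitrary $Df$-representation need not be complete and, lacking $d_{ij}$, gives no direct handle on the $\sim$-component of a putative label. The paper closes this gap by a different mechanism: it invokes the preceding theorem that $\mathfrak{Rd}_{ca}\mathfrak{C}(\Gamma)$ is generated by its $(n{-}1)$-dimensional elements, and then uses the standard fact that for a $CA_{n}$ so generated, representability of the $Df$-reduct lifts to representability of the full $CA_{n}$. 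This reduces part (2) to the $CA$ non-representability actually established in \cite{hirsh}. Your argument is missing precisely this lifting step.
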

\begin{proof}\
\begin{enumerate}
\item{We have $\mathfrak{Rd}_{ca}\mathfrak{C}$ is representable (c.f., \cite{hirsh}). Let $X=\{x\in \mathfrak{C}:\Delta x\not=n\}$. Call $J\subseteq \mathfrak{C}$ inductive if
$X\subseteq J$ and $J$ is closed under infinite unions and
complementation. Then $\mathfrak{C}$ is the smallest inductive
subset of $\mathfrak{C}$. Let $f$ be an isomorphism of
$\mathfrak{Rd}_{ca}\mathfrak{C}$ onto a cylindric set algebra with
base $U$. Clearly, by definition, $f$ preserves $s^{i}_{j}$ for each
$i, j<n$. It remains to show that $f$ preserves $s_{ij}$ for every
$i, j<n$. Let $i, j<n$, since $s_{ij}$ is boolean endomorphism and
completely additive, it suffices to show that $fs_{ij}x=s_{ij}fx$
for all $x\in At\mathfrak{C}$. Let $x\in At\mathfrak{C}$ and $\mu\in
n\setminus\Delta x$. If $\kappa=\mu$ or $l=\mu$, say $\kappa=\mu$,
then\begin{equation*} fs_{\kappa l}x=fs_{\kappa
l}c_{\kappa}x=fs^{\kappa}_{l}x=s^{\kappa}_{l}fx=s_{\kappa l}fx.
\end{equation*}
If $\mu\not\in\{\kappa, l\}$ then\begin{equation*} fs_{\kappa
l}x=fs^{l}_{\mu}s^{\kappa}_{l}s^{\mu}_{\kappa}c_{\mu}x=s^{l}_{\mu}s^{\kappa}_{l}s^{\mu}_{\kappa}c_{\mu}fx=s_{\kappa
l}fx.
\end{equation*}}
\item{Assume toward a contradiction that $\mathfrak{Rd}_{df}\mathfrak{C}$ is representable. Since $\mathfrak{Rd}_{ca}\mathfrak{C}$
is generated by $n-1$ dimensional elements then
$\mathfrak{Rd}_{ca}\mathfrak{C}$ is representable. But this
contradicts Proposition 5.4 in \cite{hirsh}.}
\end{enumerate}
\end{proof}
\begin{thm}\ \\
Let $2<n<\omega$ and $\mathcal{T}$ be any signature between $Df_{n}$
and $QEA_{n}$. Then the class of strongly representable atom
structures of type $\mathcal{T}$ is not elementary.
\end{thm}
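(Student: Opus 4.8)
The plan is to show that the class $K$ of strongly representable $\mathcal{T}$-atom structures is not closed under ultraproducts; since every class axiomatizable by first-order sentences is closed under ultraproducts by {\L}o\'s's theorem, this immediately yields that $K$ is not elementary. All witnesses will have the form $\eta(\Gamma)$, and the whole argument turns on the fact that, by Theorem~\ref{chr. no.}, the chromatic number of $\Gamma$ decides whether $\eta(\Gamma)$ is strongly representable. The key is to produce graphs whose chromatic number behaves \emph{discontinuously} under ultraproducts.

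First I would fix the combinatorial input. Using the Erd\H{o}s graphs of large girth and large chromatic number, for each $i<\omega$ I build an \emph{infinite} graph $\Gamma_i=\bigsqcup_{k<\omega}G_{k,i}$, where $G_{k,i}$ is finite with $\chi(G_{k,i})\geq k$ and girth greater than $i$. Then $\chi(\Gamma_i)=\infty$ for every $i$ (since $\Gamma_i$ contains components of arbitrarily large chromatic number), while $\Gamma_i$ has girth $>i$. Now fix a non-principal ultrafilter $D$ on $\omega$. For each fixed $\ell\geq 3$, only finitely many $\Gamma_i$ contain a cycle of length $\ell$, so by {\L}o\'s's theorem $\prod_{i/D}\Gamma_i$ contains no cycle of any finite length; hence it is a forest, so $\chi(\prod_{i/D}\Gamma_i)\leq 2<\infty$, and it is infinite.

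The second ingredient is that $\Gamma\mapsto\eta(\Gamma)$ commutes with ultraproducts. Because $n<\omega$ is fixed, the universe of $\eta(\Gamma)$ consists of the finitely described pairs $(K,\sim)$, each coded by a bounded tuple of nodes of $\Gamma$, and all the defining clauses (in particular ``$rng(K)$ is not an independent subset'') together with the relations $D_{ij}$, $\equiv_i$, $\equiv_{ij}$ are first-order in the edge relation of $\Gamma$, uniformly in $\Gamma$. Thus $\eta$ is a bounded, uniform first-order interpretation, giving $\prod_{i/D}\eta(\Gamma_i)\cong\eta(\prod_{i/D}\Gamma_i)$; passing to $\mathcal{T}$-reducts changes nothing, since reducts commute with ultraproducts. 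I expect this interpretation step to be the \textbf{main obstacle}: one must verify carefully that the clauses of the definition of $\eta(\Gamma)$ are genuinely first-order and uniform, a point that rests essentially on $n$ being finite and fixed.

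It remains to feed the two chromatic regimes into Theorem~\ref{chr. no.}. For each $i$, since $\chi(\Gamma_i)=\infty$, Theorem~\ref{chr. no.}(1) — applied, as in \cite{hirsh}, to the full complex algebra $\mathfrak{Cm}(\eta(\Gamma_i))=\mathfrak{B}(\Gamma_i)$ rather than merely to $\mathfrak{C}(\Gamma_i)$ — shows $\mathfrak{Cm}(\eta(\Gamma_i))$ is a representable $QEA_n$; its $\mathcal{T}$-reduct is then representable and equals $\mathfrak{Cm}_{\mathcal{T}}(\mathfrak{Rd}_{\mathcal{T}}\eta(\Gamma_i))$, so $\mathfrak{Rd}_{\mathcal{T}}\eta(\Gamma_i)\in K$ (here I use $\mathcal{T}\subseteq QEA_n$). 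On the other hand, writing $\Gamma:=\prod_{i/D}\Gamma_i$, the graph $\Gamma$ is infinite with $\chi(\Gamma)<\infty$, so Theorem~\ref{chr. no.}(2) makes $\mathfrak{Rd}_{df}\mathfrak{C}(\Gamma)$, hence $\mathfrak{Rd}_{df}\mathfrak{Cm}(\eta(\Gamma))$, non-representable; since $Df_n\subseteq\mathcal{T}$, the algebra $\mathfrak{Rd}_{\mathcal{T}}\mathfrak{Cm}(\eta(\Gamma))$ is non-representable as well, so $\mathfrak{Rd}_{\mathcal{T}}\eta(\Gamma)=\prod_{i/D}\mathfrak{Rd}_{\mathcal{T}}\eta(\Gamma_i)\notin K$. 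Thus $K$ contains each factor but not their ultraproduct, so $K$ is not closed under ultraproducts and is therefore not elementary. The remaining signature bookkeeping — extracting non-representability from the $Df_n$-reduct at the bottom and representability from $QEA_n$ at the top — is routine once the two endpoints are in place.
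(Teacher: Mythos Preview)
Your proposal is correct and follows essentially the same route as the paper: build from Erd\H{o}s's high-girth/high-chromatic graphs a family $\Gamma_i$ with $\chi(\Gamma_i)=\infty$ whose non-principal ultraproduct is acyclic (hence $\chi\le 2$), invoke Theorem~\ref{chr. no.} at the two ends, and use a uniform first-order interpretation to commute the construction with ultraproducts. Your explicit handling of the $\mathcal{T}$-reduct bookkeeping (pulling representability down from $QEA_n$ and non-representability up from $Df_n$) and your care to apply part~(1) to $\mathfrak{Cm}(\eta(\Gamma_i))=\mathfrak{B}(\Gamma_i)$ rather than only $\mathfrak{C}(\Gamma_i)$ are exactly the points the paper leaves implicit in its final paragraph.
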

\begin{proof}
By Erd\"{o}s's famous 1959 Theorem \cite{Erdos}, for each finite
$\kappa$ there is a finite graph $G_{\kappa}$ with
$\chi(G_{\kappa})>\kappa$ and with no cycles of length $<\kappa$.
Let $\Gamma_{\kappa}$ be the disjoint union of the $G_{l}$ for
$l>\kappa$. Clearly, $\chi(\Gamma_{\kappa})=\infty$. So by Theorem
\ref{chr.
no.} (1), $\mathfrak{C}(\Gamma_{\kappa})=\mathfrak{C}(\Gamma_{\kappa})^{+}$ is representable.\\
\- Now let $\Gamma$ be a non-principal ultraproduct
$\prod_{D}\Gamma_{\kappa}$ for the $\Gamma_{\kappa}$. It is
certainly infinite. For $\kappa<\omega$, let $\sigma_{\kappa}$ be a
first-order sentence of the signature of the graphs. stating that
there are no cycles of length less than $\kappa$. Then
$\Gamma_{l}\models\sigma_{\kappa}$ for all $l\geq\kappa$. By
{\L}o\'{s}'s Theorem, $\Gamma\models\sigma_{\kappa}$ for all
$\kappa$. So $\Gamma$ has no cycles, and hence by, \cite{hirsh}
Lemma 3.2, $\chi(\Gamma)\leq 2$. By Theorem \ref{chr. no.} (2),
$\mathfrak{Rd}_{df}\mathfrak{C}$ is not representable. It is easy to
show (e.g., because $\mathfrak{C}(\Gamma)$ is first-order
interpretable in $\Gamma$, for any $\Gamma$) that\begin{equation*}
\prod_{D}\mathfrak{C}(\Gamma_{\kappa})\cong\mathfrak{C}(\prod_{D}\Gamma_{\kappa}).\end{equation*}
Combining this with the fact that: for any $n$-dimensional atom
structure $\mathcal{S}$ \begin{center}$\mathcal{S}$ is strongly
representable $\Longleftrightarrow$ $\mathfrak{Cm}\mathcal{S}$ is
representable,\end{center}the desired follows.
\end{proof}
\bibliographystyle{plain}

\end{document}